\newtheorem{definition}{Definition}[section]
\newtheorem{lemma}[definition]{Lemma}
\newtheorem{theorem}[definition]{Theorem}
\newtheorem{prop}[definition]{Proposition}
\numberwithin{equation}{section}
\begin{document}

\title[Characteristic polynomials of products of Wigner matrices]{\sc Characteristic polynomials of products of Wigner matrices: finite-$N$ results and Lyapunov universality}

\author{Gernot Akemann, Friedrich G\"otze and Thorsten Neuschel}

\address{Faculty of Mathematics and Faculty of Physics, Bielefeld University, PO-Box 100131, 33501 Bielefeld, Germany}

\keywords{Averages of characteristic polynomials, products of random matrices, Wigner matrices, Lyapunov exponents, universality.}
\commby{}
\begin{abstract}
We compute the average characteristic polynomial of the hermitised product of $M$ real or complex Wigner matrices of size $N\times N$ and the average of the characteristic polynomial of a product of $M$ such Wigner matrices 
times the  characteristic polynomial of the  conjugate matrix.
Surprisingly, the results agree with that of the product of $M$ real or complex Ginibre matrices at {\it finite}-$N$, which have i.i.d. Gaussian entries. For the latter the average 
characteristic polynomial yields the orthogonal polynomial for the singular values of the product matrix, whereas the product of the two characteristic polynomials involves the kernel of complex eigenvalues.
This extends the result of Forrester and Gamburd for one characteristic polynomial of a single random matrix and only depends on the first two  moments. 
In the limit $M\to\infty$ at fixed $N$ we determine the locations of the zeros of a single characteristic polynomial, rescaled as Lyapunov exponents by taking the logarithm of the $M$th root. 
The position of the $j$th zero 
agrees asymptotically for large-$j$ with the position of the $j$th Lyapunov 
exponent for products of Gaussian random matrices, 
hinting at the universality of the latter.

\end{abstract}

\maketitle
\section{Introduction and Main Results}

Characteristic polynomials represent one of the central building blocks when studying the spectral statistics of random matrices. For example,  in invariant ensembles the Heine-formula directly relates the expectation value of a single characteristic polynomial over a random matrix of size $N\times N$ to the orthogonal polynomial of degree $N$. Invariant ensembles represent determinantal point processes, and the corresponding kernel of orthogonal polynomials follows from the expectation value of two characteristic polynomials \cite{PZJ,BorodinOP} at finite-$N$. This statement extends to non-Hermitian ensembles as well \cite{AV}. 
In applications of random matrices characteristic polynomials also play a key role, e.g. in comparison to moments and correlations  of the Riemann $\zeta$-function \cite{KN00,K}, or in the theory of strong interactions in the presence of baryon chemical potential \cite{Misha}.

One of the central questions in random matrix theory is that of universality, that is the independence of the distribution of matrix elements in asymptotic regimes such as the limit of large matrix size.  Two main classes of deformations of the classical ensembles with independent Gaussian distribution of matrix elements exist:  Wigner ensembles, where the independence is kept and invariance is dropped, allowing for more general distributions than Gaussian, and the invariant ensembles, where independence is dropped while keeping invariance under orthogonal or unitary transformations. This introduces a dependence among matrix elements, typically through a single potential in the distribution. 

Given that invariant ensembles represent determinantal point processes at finite-$N$, the knowledge of the kernel at finite-$N$ allows a direct asymptotic analysis
of the marginals or $k$-point correlation functions of the matrix eigenvalues.
Here, sophisticated techniques as the Riemann-Hilbert method have been developed. 
The universality of products and ratios of characteristic polynomials has been directly addressed as well, yielding a generating functional for the kernel, both for invariant \cite{BH,FS1} and Wigner ensembles \cite{GK}, see also  
\cite{Tatyana, Afanasiev} for recent work using supersymmetry. 
We refer to \cite{FS,BDS} for most concise expressions for averages of products and ratios of characteristic polynomials at finite-$N$, and to \cite{GGK} for the supersymmetric perspective on that.

In Wigner ensembles, however, such determinantal structures seem to be completely absent at finite matrix size. In consequence powerful probabilistic tools have been developed by several groups, in oder to prove universality in the various scaling regimes, for Hermitian and non-Hermitian random matrices, cf. \cite{Erdoes,TaoVu}, respectively and references therein.  
How can we understand this broad universality? Are there perhaps also objects within Wigner ensembles, that show a similar structure as for Gaussian ensembles at finite-$N$?  
Indeed it was shown by Forrester and Gamburd \cite{FG04}, that the expectation value of a single characteristic polynomial of Wigner matrices of size $N\times N$ agrees with that of the corresponding Gaussian ensemble. It is given by the Hermite polynomial for the Gaussian Unitary Ensemble (GUE) and by the Laguerre polynomial for the complex Wishart ensemble 
(also called chiral GUE or Laguerre unitary ensemble). The same polynomials are obtained for real Wigner matrices \cite{FG04}.
In this short article we will extend the list of such examples of an exact agreement at finite-$N$ to products of $M$ 
Wigner matrices, both for singular values and complex eigenvalues of the product matrix. 

Consider $M$ independent, Gaussian random matrices $G_1,\ldots, G_M$ of size $N\times N$. Each matrix $G_k$ has independent  matrix elements $g_{i,j}^{(k)}$ with identical normal distribution, with zero mean and variance $\sigma^2_k>0$, $\mathbb{E}[g_{i,j}^{(k)}] =0$ and $\mathbb{E}[g_{i,j}^{(k)}\overline{g_{m,n}^{(l)}}] =\delta_{i,m}\delta_{j,n}\delta_{k,l}\sigma_k^2$. 
The squared singular values of $G_k$ are called Wishart ensembles, whereas the complex eigenvalues of $G_k$ are called Ginibre ensembles.

In \cite{AIK} it was shown for complex matrices with unit variances $\sigma_k=1$ that the squared singular values of the product matrix $G_1\cdots G_M$ form a determinantal point process, representing an example for a polynomial ensemble. The corresponding kernel of biorthogonal functions was explicitly determined in \cite{AIK} using Gram-Schmidt orthogonalisation\footnote{It is not difficult to extend the proof in \cite{AIK} to allow for $\sigma_1,\ldots,\sigma_M>0$.}. As the Heine formula trivially extends to polynomial ensembles, the following 
holds for the orthogonal polynomials:
\begin{equation}
\label{M-Wishart-prod}
\mathbb{E}\left[\det\left[xI_N-(G_1\cdots G_M)^*(G_1\cdots G_M)\right]\right]=p_N^{(M)}(x)\ ,
\end{equation}
where the polynomial $p_N^{(M)}$ of degree $N$ is given by Eq. (40) \cite{AIK}
\begin{equation}
\label{OP}
p_N^{(M)}(x)=
(-1)^N (N!)^{M+1}\sum_{k=0}^N 
\frac{(-x)^k}{(N-k)!(k!)^{M+1}}\ . 
\end{equation}
For $M=1$ this relation is well known to hold for a single Wishart ensemble, where the polynomial reduces to the Laguerre polynomial $p_N^{(M=1)}(x)=(-1)^NN!\,L_N(x)$, in monic normalisation. 
We find that the same relation extends to the product of real or complex independent 
Wigner matrices, where we allow for non homogeneous variances.

\begin{theorem}
\label{1polynomial}
Let $X_1,\ldots,X_M$ be $M$ independent Wigner matrices of size $N\times N$ such that the entries $x_{i,j}^{(k)}$ of every matrix $X_k$ are independent as well, having arbitrary real or complex
distributions with zero mean and variance $\sigma_k>0$, i.e., $\mathbb{E}\left[x_{i,j}^{(k)}\right] =0$ and $
\mathbb{E}[x_{i,j}^{(k)}\overline{x_{m,n}^{(l)}}] =\delta_{i,m}\delta_{j,n}\delta_{k,l}\sigma_k^2$.
Defining $\tau_M=\sigma_1\cdots\sigma_M$, the expectation values for the following characteristic polynomials read 
\begin{eqnarray}
\label{M-Wigner-prod}
\mathbb{E}\left[\det\left[xI_N-\left(X_1 \cdots X_M\right)^* \left(X_1\cdots X_M\right)\right]\right]&=& \tau_M^{2N}
p_N^{(M)}\left(\frac{x}{\tau_M^2}\right)\ ,\\
\label{M-mixed-prod}
\mathbb{E}\left[\det\left[xI_N-\left(X_1^{*}X_1\right)\ldots \left(X_M^{*}X_M\right)\right]\right]&=& \tau_M^{2N}
p_N^{(M)}\left(\frac{x}{\tau_M^2}\right)\ .
\end{eqnarray}
\end{theorem}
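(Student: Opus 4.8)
The plan is to compute each coefficient of the characteristic polynomial separately by passing to exterior powers. Writing $A=(X_1\cdots X_M)^*(X_1\cdots X_M)$ and expanding the determinant in powers of $x$, one has
\[
\det(xI_N-A)=\sum_{k=0}^N(-1)^k\,x^{N-k}\,\operatorname{Tr}\!\bigl(\wedge^k A\bigr),
\]
where $\operatorname{Tr}(\wedge^k A)=\sum_{|S|=k}\det A_{S,S}$ is the $k$-th elementary symmetric function of the eigenvalues of $A$. It therefore suffices to evaluate $\mathbb{E}[\operatorname{Tr}(\wedge^k A)]$ for each $k$ and to match the result against the coefficient of $x^{N-k}$ in $\tau_M^{2N}p_N^{(M)}(x/\tau_M^2)$; the same reduction applies verbatim to the mixed product $B=(X_1^*X_1)\cdots(X_M^*X_M)$.

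The key algebraic input is that the exterior power is multiplicative, $\wedge^k(PQ)=(\wedge^k P)(\wedge^k Q)$ (Cauchy--Binet), and compatible with the adjoint, $\wedge^k(P^*)=(\wedge^k P)^*$. Setting $W_m=\wedge^k X_m$, the $\binom{N}{k}\times\binom{N}{k}$ matrix whose $(R,C)$ entry is the minor $\det(X_m)_{R,C}$ indexed by $k$-subsets $R,C\subseteq\{1,\dots,N\}$, these identities give $\wedge^k A=(W_1\cdots W_M)^*(W_1\cdots W_M)$ for the first polynomial and $\wedge^k B=(W_1^*W_1)\cdots(W_M^*W_M)$ for the mixed product. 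In both cases, expanding the traces and using independence of the $X_m$ to factorise the expectation across $m$, the problem collapses to products of single-matrix minor correlations $\mathbb{E}[\det(X_m)_{R,S}\,\overline{\det(X_m)_{R',S'}}]$.

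The heart of the argument---and the step I expect to be the main obstacle to formulate cleanly---is the evaluation of this single-matrix correlation for an arbitrary distribution with only the prescribed first two moments. Expanding each minor as a signed sum over permutations, $\det X_{R,S}=\sum_{\pi}\operatorname{sgn}(\pi)\prod_a x_{r_a,s_{\pi(a)}}$, one observes that within a single minor all factors carry distinct row indices, so every entry $x_{ij}$ occurs at most once unconjugated and at most once conjugated across the pair of minors. Since the entries are independent with zero mean, a nonzero term requires every occurring variable to appear exactly once in each role, contributing $\mathbb{E}[|x_{ij}|^2]=\sigma^2$; this is precisely the point where higher moments, and in particular $\mathbb{E}[(x_{ij})^2]$ in the complex case, are excluded, because the power of each variable never exceeds one on either side. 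Tracking the index constraints then forces $R=R'$, $S=S'$ and $\pi=\pi'$, whence $\mathbb{E}[\det X_{R,S}\,\overline{\det X_{R',S'}}]=\delta_{R,R'}\delta_{S,S'}\,k!\,\sigma^{2k}$, identical for real and complex entries and, crucially, identical to the Gaussian value. This is the finite-$N$ mechanism behind the claimed universality and the dependence on the first two moments alone.

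It then remains to chain these correlations. For the product $X_1\cdots X_M$ the telescoping Kronecker deltas from consecutive factors collapse the intermediate subset sums, giving $\mathbb{E}[\,|\det(X_1\cdots X_M)_{R,C}|^2\,]=(k!)^M\tau_M^{2k}\binom{N}{k}^{M-1}$, independent of $R,C$; summing over the $\binom{N}{k}^2$ choices of $R,C$ yields $\mathbb{E}[\operatorname{Tr}(\wedge^k A)]=\binom{N}{k}^{M+1}(k!)^M\tau_M^{2k}$. For the mixed product the cyclic structure of the trace forces all outer subsets to coincide, producing the same value, so the two characteristic polynomials agree coefficientwise. A routine rewriting of $\binom{N}{k}^{M+1}(k!)^M$ as $(N!)^{M+1}/\bigl(k!\,((N-k)!)^{M+1}\bigr)$ shows that $(-1)^k\binom{N}{k}^{M+1}(k!)^M\tau_M^{2k}$ is exactly the coefficient of $x^{N-k}$ in $\tau_M^{2N}p_N^{(M)}(x/\tau_M^2)$, establishing both \eqref{M-Wigner-prod} and \eqref{M-mixed-prod}. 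Alternatively, since the single-matrix minor correlation coincides with its Gaussian counterpart, one may invoke \eqref{M-Wishart-prod}--\eqref{OP} directly to identify the common value without recomputing the combinatorial prefactor.
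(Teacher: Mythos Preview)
Your argument is correct and is essentially the paper's own proof, recast in the language of exterior powers: the multiplicativity $\wedge^k(PQ)=(\wedge^k P)(\wedge^k Q)$ is the iterated Cauchy--Binet expansion the paper uses, and your single-matrix minor identity $\mathbb{E}\bigl[\det X_{R,S}\,\overline{\det X_{R',S'}}\bigr]=\delta_{R,R'}\delta_{S,S'}\,k!\,\sigma^{2k}$ is exactly the paper's Lemma~\ref{expectedminorprod}, proved the same way. The subsequent bookkeeping of Kronecker deltas and subset counts matches the paper's computation, and your derivation of the mixed-product case from the same lemma is what the paper packages as the second part of Lemma~\ref{expectedminorprod}.
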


In particular, Eq. \eqref{M-Wigner-prod} agrees with the expression for the product of complex Ginibre matrices \eqref{M-Wishart-prod}. Based on bosonisation it was shown in \cite{Mario15} that the same result \eqref{M-Wigner-prod}
holds for products of real Ginibre matrices, treating all symmetry classes in a unified way. 
Theorem \ref{1polynomial} extends the result  by Forrester and Gamburd  for a single Wigner matrix $M=1$, see 
\cite[Prop. 12]{FG04}.
A similar result holds for a single real symmetric or Hermitian Wigner matrix $H$ with variance $\sigma^2$, cf. 
\cite[Prop 11.]{FG04} 
\begin{equation}
\label{Hermite}
\mathbb{E}\left[\det\left[xI_N-H\right]\right]=\left(\frac{\sigma}{2}\right)^{-N}H_N\left(\frac{x}{\sigma}\right)\ ,
\end{equation}
with a monic Hermite polynomial on the right-hand side. In particular this formula holds for Gaussian random matrices.  
It is tempting to expect that Theorem \ref{1polynomial} implies similar  
combinatorial consequences as \cite{FG04} for $M=1$, in particular as the moment generating function of products of random matrices relates to Fuss-Catalan numbers, see e.g. \cite{Neuschel1}.

The fact that the two equations in Theorem \ref{1polynomial} agree extends the observed asymptotic commutativity of (rectangular) matrices $G_k$  for $N\to\infty$  described first in \cite{BurdaLivanSwiech}, and later on for finite-$N$ in \cite{IK} in a weak sense. 

We turn to the complex eigenvalues and corresponding characteristic polynomials of products of real or complex Ginibre and Wigner matrices. Due to the independence of the matrices and matrix elements, for both products of Ginibre and Wigner matrices the expectation value of a single characteristic polynomial is trivial,
\begin{equation}
\mathbb{E}\left[\det\left[zI_N-(G_1\cdots G_M)\right]\right]=
\mathbb{E}\left[\det\left[zI_N-(X_1\cdots X_M)\right]\right]=z^N\ .
\end{equation}
For products of independent complex  respectively real Ginibre matrices it was shown in \cite{ABu} respectively \cite{FI} that the complex eigenvalues of the product matrix $G_1\cdots G_M$ form a determinantal respectively Pfaffian point process, with a rotationally invariant weight function. 
Thus the monomials $z^N$ are the orthogonal polynomials respectively the even subset of skew orthogonal polynomials as well, albeit trivial ones. The determinantal point process is of orthogonal polynomial type, being proportional to the modulus square of the Vandermonde determinant of complex eigenvalues. Thus the kernel is given by a sum over orthonormalised polynomials, containing nontrivial information about the weight through their (squared) norms $h_k$. A similar statement holds for the kernel of skew orthogonal polynomials for the complex eigenvalues of the Pfaffian point process  \cite{FI}.

In orthogonal polynomial ensembles it is known, both for real \cite{PZJ,BorodinOP} and complex eigenvalues \cite{AV}, that the kernel can be expressed 
by a product of two characteristic polynomials. The same holds true for the kernel of skew orthogonal polynomials in Pfaffian process including the real Ginibre ensemble \cite{APS}.
Combined with the result for the kernel $K_N$ of products of $M$ independent complex Ginibre matrices \cite{ABu}, extended to non homogeneous variances in \cite{ACi}, we have the following statement
\begin{equation}
\label{M-Ginibre-kernel}
\mathbb{E}\left[\det\left[zI_N-G_1\cdots G_M\right]\det\left[wI_N-(G_1\cdots G_M)^*\right]\right]= \tau_M^{2N} h_N^{(M)} K_{N+1}^{(M)}\left(\frac{z}{\tau_M},\frac{w}{\tau_M}\right) ,
\end{equation}
\begin{equation}
\label{kernel-def}
K_{N+1}^{(M)}(z,w)=\sum_{k=0}^N  \frac{(zw)^k}{h_k^{(M)}}\ , \ \ h_k^{(M)}=\pi(k!)^M\ .
\end{equation}
For products of $M$ real Ginibre matrices the right-hand side of \eqref{M-Ginibre-kernel} is proportional to  
the anti-symmetric kernel of skew orthogonal polynomials $\kappa_{N}(z,w)/(z-w)$,  for $z\neq w$ \cite{APS,FI}. 
We will show that the same result holds for products of independent Wigner matrices.
\begin{theorem}
\label{2polynomial}
Given $M$ independent Wigner matrices satisfying  the same conditions as in Theorem \ref{1polynomial}, the average of two characteristic polynomials with conjugate matrices reads: 
\begin{equation}
\label{M-Wigner-kernel}
\mathbb{E}\left[\det\left[zI_N- \left(X_1\cdots X_M\right)\right]\det\left[wI_N-\left(X_1 \cdots X_M\right)^*\right]\right]=\tau_M^{2N}h_N^{(M)} K_{N+1}^{(M)}\left(\frac{z}{\tau_M},\frac{w}{\tau_M}\right).
\end{equation}
\end{theorem}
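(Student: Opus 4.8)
The plan is to reduce everything to the covariance of the coefficients of the two characteristic polynomials, and then to a single-matrix moment that only probes the variance. Write $P=X_1\cdots X_M$ and denote by $c_j(P)=\sum_{|S|=j}\det P_{S,S}$ the sum of all $j\times j$ principal minors of $P$ (equivalently the $j$-th elementary symmetric function of its eigenvalues). First I would expand
\[
\det(zI_N-P)=\sum_{j=0}^N(-1)^j z^{N-j}c_j(P),\qquad
\det(wI_N-P^{*})=\sum_{l=0}^N(-1)^l w^{N-l}\,\overline{c_l(P)},
\]
where I have used that $(P^{*})_{S,S}=(P_{S,S})^{*}$ and $\det(A^{*})=\overline{\det A}$, so that $c_l(P^{*})=\overline{c_l(P)}$. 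Multiplying the two sums and taking the expectation, the whole problem collapses to evaluating $\mathbb{E}\bigl[c_j(P)\,\overline{c_l(P)}\bigr]$ for $0\le j,l\le N$; the prefactors $(-1)^{j+l}$ will become $1$ on the surviving diagonal $j=l$.

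Next I would open up each $c_j(P)$ by iterated Cauchy--Binet. For a $j$-subset $S$ one has $\det P_{S,S}=\sum_{T_1,\dots,T_{M-1}}\det(X_1)_{S,T_1}\det(X_2)_{T_1,T_2}\cdots\det(X_M)_{T_{M-1},S}$, summed over $j$-subsets $T_1,\dots,T_{M-1}$, so that, writing $T_0=T_M=S$, the coefficient $c_j(P)$ becomes a sum of cyclic chains $\prod_{k=1}^M\det(X_k)_{T_{k-1},T_k}$ over all tuples $(T_0,\dots,T_{M-1})$ of $j$-subsets. Since $X_1,\dots,X_M$ are independent and each $X_k$ enters a given chain exactly once, the expectation $\mathbb{E}\bigl[c_j(P)\,\overline{c_l(P)}\bigr]$ factorises into a product over $k$ of single-matrix moments $\mathbb{E}\bigl[\det(X_k)_{T_{k-1},T_k}\,\overline{\det(X_k)_{U_{k-1},U_k}}\bigr]$, where the $U_i$ are the ($l$-subset) chain indices coming from $\overline{c_l(P)}$.

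The heart of the argument, and the step I expect to be the main obstacle, is this single-matrix moment. Expanding both minors by the Leibniz formula, a generic term is a product of un-conjugated entries of $X_k$ (from the holomorphic minor) times conjugated entries (from the anti-holomorphic one). Because the entries are independent with mean zero, a term survives the expectation only if every entry is paired with its own conjugate; as one factor is purely holomorphic and the other purely anti-holomorphic, only the mixed second moment $\mathbb{E}[x\overline{x}]=\sigma_k^2$ ever contributes, and crucially neither higher moments nor $\mathbb{E}[x^2]$ play any role. This is exactly what makes the result depend only on the first two moments and lets the real and complex cases be handled uniformly. A short bookkeeping argument then shows that the pairing forces the row sets and the column sets of the two minors to coincide, yielding $\mathbb{E}\bigl[\det(X_k)_{A,B}\,\overline{\det(X_k)_{C,D}}\bigr]=\delta_{A,C}\,\delta_{B,D}\,|A|!\,\sigma_k^{2|A|}$.

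Feeding this back, the Kronecker deltas force $j=l$ and pin $U_i=T_i$ all along the chain, so only $j=l$ survives and the remaining sum over tuples $(T_0,\dots,T_{M-1})$ of $j$-subsets becomes a pure count: there are $\binom{N}{j}^M$ of them, each contributing $\prod_{k=1}^M j!\,\sigma_k^{2j}$, whence $\mathbb{E}\bigl[|c_j(P)|^2\bigr]=\bigl(N!/(N-j)!\bigr)^M\tau_M^{2j}$. Substituting into the expanded product and re-indexing by $k=N-j$ gives $\sum_{k=0}^N(zw)^k\bigl(N!/k!\bigr)^M\tau_M^{2(N-k)}$, which is precisely $\tau_M^{2N}h_N^{(M)}K_{N+1}^{(M)}(z/\tau_M,w/\tau_M)$ after inserting $h_k^{(M)}=\pi(k!)^M$ into \eqref{kernel-def}. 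The only genuine work is the combinatorial lemma of the preceding paragraph; everything else is expansion and re-indexing.
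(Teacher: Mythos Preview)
Your proposal is correct and follows essentially the same route as the paper: expand each characteristic polynomial into principal minors, apply iterated Cauchy--Binet to each minor of the product, use independence to factor the expectation over $k$, and then invoke the single-matrix identity $\mathbb{E}\bigl[\det(X_k)_{A,B}\,\overline{\det(X_k)_{C,D}}\bigr]=\delta_{A,C}\,\delta_{B,D}\,|A|!\,\sigma_k^{2|A|}$, which is exactly the paper's Lemma~\ref{expectedminorprod} (Eq.~\eqref{La1.1}). The remaining count $\binom{N}{j}^M(j!)^M\tau_M^{2j}$ and the re-indexing $k=N-j$ match the paper's computation line for line.
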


It is simple to understand why the above results do not easily extend to more products of characteristic polynomials: In all averages \eqref{M-Wigner-prod}, \eqref{M-mixed-prod} and \eqref{M-Wigner-kernel} every matrix $X_k$ and its adjoint $X_k^*$ appear exactly once. This also explains the absence of higher moments of these matrices. 
In principle, the agreement between Gaussian and Wigner matrices at finite-$N$ could thus be extended to the expectation of any polynomial that shares this property.
Let us emphasise that the proofs of Theorem \ref{1polynomial} and \ref{2polynomial} are purely algebraic and constructive. They directly yield the explicit combinatorial result for Wigner matrices,   
without recurring to independent calculations for Gaussian matrix elements. 
For simplicity we have restricted ourselves to square matrices, see e.g.  \cite{AIK} for the generalisation of \eqref{OP} to products of rectangular complex Gaussian matrices. We expect that this agreement holds for products of rectangular Wigner matrices as well.

Since the above identities between Gaussian and Wigner ensembles already hold for finite-$N$, the universality of these expectations in various large-$N$ limits is guaranteed. Let us emphasise, however, that this does not imply an identity for all $k$-point singular value or complex eigenvalue correlation functions  at finite-$N$,  as then Wigner ensembles do not possess any determinantal or Pfaffian structure.
For singular values of complex matrices, the derivation of the kernel within polynomial ensembles requires to evaluate the expected ratio of two characteristic polynomials, see \cite{ForresterDesrosier} for more details. The evaluation of such objects remains a highly nontrivial task for products of Wigner ensembles. 
While for complex eigenvalues \eqref{M-Ginibre-kernel} indeed establishes the (skew-)kernel, the $k$-point correlation functions also depend on the weight function multiplying this (skew-)kernel, which contributes non-trivially in the large-$N$ limit.

In order to derive a non-trivial universality statement for products of Wigner matrices based on the above findings, we consider a growing number of factors, choosing $M\to\infty$, while keeping $N$ fixed.
Consider the zeros of the average characteristic polynomial \eqref{M-Wigner-prod}
\begin{equation}
\label{zerosOP}
\tau_M^{2N}p_N^{(M)}\left(\frac{x}{\tau^2_M}\right) = \prod_{j=1}^N (x-z_j) \ ,
\end{equation}
denoted by  $z_j=z_j^{(M)}$ in increasing order. These are all non-negative as is shown in \cite{Neuschel1}.

We wish to compare these zeros to the limiting Lyapunov exponents of the product matrix $(X_1 \cdots X_M)^*(X_1 \cdots X_M)$. They are defined in terms of the ordered non-negative eigenvalues (or squared singular values) $\lambda_1^{(M)},\ldots,\lambda_N^{(M)}$ of the product matrix.
We first define the \emph{incremental} Lyapunov exponents by the following re-scaled quantities 
\begin{equation}\label{Lincrement}
\mu_j^{(M)}:=\frac{1}{2M}\log\left( \lambda_j^{(M)}\right),\quad j=1,\ldots,N.
\end{equation}
The Lyapunov exponents are obtained in the limit 
\begin{equation}
\mu_j = \lim_{M\to\infty}\mu_j^{(M)}, \quad j=1,\ldots,N\ ,
\end{equation}
and we refer to \cite{reviewL} for the vast literature about their existence for Gaussian and other random matrices. 
In the same re-scaling as in \eqref{Lincrement} we obtain the following for the zeros. 
 \begin{theorem}
 \label{Thmzeros}
For the ordered zeros $z_j^{(M)}$ of the averaged characteristic polynomials \eqref{M-Wigner-prod} of the product of $M$ Wigner matrices with variances $\sigma_k>0$, satisfying $\lim_{M\to\infty} \tau_M^{1/M} =\sigma >0$, it holds
\begin{equation}
\label{zerolim}
\lim_{M\to\infty} \frac{1}{2M}\log z_j^{(M)}= \frac{1}{2}\left(\log( j)+ \log(\sigma^2)\right),\quad j=1,\ldots,N.
\end{equation}
 \end{theorem}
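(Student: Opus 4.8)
The plan is to locate the zeros of $p_N^{(M)}$ directly from the explicit coefficients in \eqref{OP}, turning the problem into one about the asymptotics of its elementary symmetric functions. First I would strip off the variance scaling. Writing the monic factorisation $p_N^{(M)}(y)=c\prod_{j=1}^N\bigl(y-s_j^{(M)}\bigr)$ and comparing with \eqref{zerosOP} after the substitution $y=x/\tau_M^2$ shows that $z_j^{(M)}=\tau_M^2\,s_j^{(M)}$, where $s_1^{(M)}\le\cdots\le s_N^{(M)}$ are the ordered zeros of $p_N^{(M)}$; these are non-negative by \cite{Neuschel1}, and in fact strictly positive since the constant term of $p_N^{(M)}$ does not vanish. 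Since
\begin{equation}
\frac{1}{2M}\log z_j^{(M)}=\frac{1}{M}\log\tau_M+\frac{1}{2M}\log s_j^{(M)}
\end{equation}
and $\frac1M\log\tau_M\to\log\sigma=\tfrac12\log(\sigma^2)$ by hypothesis, the theorem reduces to the single statement $\frac{1}{M}\log s_j^{(M)}\to\log j$ for each fixed $j$.

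Next I would read off the elementary symmetric functions from \eqref{OP}. Matching the coefficient of $x^{N-m}$ in $p_N^{(M)}$ against $c\prod_{j}\bigl(x-s_j^{(M)}\bigr)$ and cancelling the overall constant gives
\begin{equation}
e_m\!\left(s_1^{(M)},\ldots,s_N^{(M)}\right)=\frac{1}{m!}\left(\frac{N!}{(N-m)!}\right)^{M+1},\qquad m=1,\ldots,N.
\end{equation}
Dividing the logarithm by $M$ and letting $M\to\infty$, the prefactor $1/m!$ becomes negligible and the exponent contributes only a factor $\tfrac{M+1}{M}\to1$, so
\begin{equation}
\frac{1}{M}\log e_m\!\left(s^{(M)}\right)\longrightarrow \log\frac{N!}{(N-m)!}=\sum_{i=N-m+1}^{N}\log i ,
\end{equation}
which is the same growth rate exhibited by the consecutive coefficient ratios $c_{k-1}/c_k=k^{M+1}/(N-k+1)$, a useful consistency check.

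The main step is to convert this symmetric-function asymptotics into asymptotics of the individual ordered zeros, for which I would use the elementary squeeze valid for any ordered positive reals. Denoting by $P_m^+=s_{N-m+1}^{(M)}\cdots s_N^{(M)}$ the product of the $m$ largest zeros, one has $P_m^+\le e_m\!\left(s^{(M)}\right)\le\binom{N}{m}P_m^+$, because $e_m$ is a sum of $\binom{N}{m}$ positive products of $m$ zeros, the largest of which is exactly $P_m^+$. Since $\binom{N}{m}$ is an $M$-independent constant, applying $\tfrac1M\log(\cdot)$ and letting $M\to\infty$ washes out this factor, so $\frac1M\log P_m^+$ shares the limit $\sum_{i=N-m+1}^N\log i$. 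Telescoping the partial log-products for $m$ and $m-1$ isolates a single zero,
\begin{equation}
\frac{1}{M}\log s_{N-m+1}^{(M)}\longrightarrow \Bigl(\sum_{i=N-m+1}^N\log i\Bigr)-\Bigl(\sum_{i=N-m+2}^N\log i\Bigr)=\log(N-m+1),
\end{equation}
and setting $j=N-m+1$ gives $\frac1M\log s_j^{(M)}\to\log j$; combined with the scaling above this yields \eqref{zerolim}.

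I expect the only delicate point to be the uniformity of the argument across all $m$ simultaneously: the telescoping is legitimate precisely because the two-sided bound holds with $M$-independent constants, so each $\frac1M\log P_m^+$ converges and the differences may be taken term by term. The strict positivity and reality of the zeros, guaranteed by \cite{Neuschel1}, is essential here, since it both justifies the ordering and ensures that every product occurring in $e_m$ is positive, so that no cancellation can spoil the squeeze.
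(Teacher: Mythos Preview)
Your argument is correct and considerably more direct than the paper's. The paper never touches the elementary symmetric functions; instead it passes to the rescaled polynomial
\[
\mathcal{P}_N^{(M)}(z)=\sum_{k=0}^N\binom{N}{k}(-1)^k\left(\frac{z^{2k}}{k!}\right)^{M}
\]
and carries out a complex-analytic dominant-term analysis: one proposition shows that away from the circles $|z|^2=j$ a single summand governs the asymptotics (so no zeros lie there), and a second proposition zooms in near $|z|^2=\nu$ at scale $1/M$ to locate the unique positive zero near $\sqrt{\nu}$, even pinning down the first-order correction $\sqrt{\nu}\bigl(1+\tfrac{1}{2M}\log\tfrac{\nu}{N+1-\nu}\bigr)$. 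From this the statement of the theorem is read off.

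Your route avoids all of this by exploiting the explicit coefficients in \eqref{OP} together with the positivity of the roots from \cite{Neuschel1}. The squeeze $P_m^{+}\le e_m\le\binom{N}{m}P_m^{+}$ and the telescoping step are sound because the zeros are real and strictly positive and the combinatorial factor is $M$-independent, exactly as you note. What you gain is brevity and an entirely elementary proof of \eqref{zerolim}; what the paper's approach gains is additional structural information, namely the full weak limit of the zero set of $\mathcal{P}_N^{(M)}$ on the union of circles and the $1/M$ correction to the positive zero locations, neither of which is needed for the theorem as stated.
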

In the case that all matrices $X_j$ are real or complex Ginibre matrices, labelled by $\beta=1,2$ respectively, the corresponding Lyapunov exponents are explicitly known \cite{Newman,Peter} 
\begin{equation}
\mu_j = \frac{1}{2} \left(\Psi\left(\frac{\beta j}{2}\right)+ \log\left(\frac{2\sigma^2}{\beta}\right)\right),\quad j=1,\ldots,N,
\end{equation}
where $\Psi$ denotes the Digamma function. We refer to \cite{JesperLyap} for rectangular matrices. For large $j$, applying the large argument asymptotic for  the Digamma-function \cite{NIST}, we get
\begin{equation}
\Psi(j)=\log(j)+\mathcal{O}\left(\frac{1}{j}\right)\ .
\end{equation}
In consequence, for large orders $j\leq N$ with $N$ large but not necessarily infinite, the $\beta$-dependence drops out and the 
$j$th Lyapunov exponent and $j$th zero of the characteristic polynomial (which is universal) agree up to 
an error $\mathcal{O}(1/j)$. This strongly suggests
that the $j$th Lyapunov exponents for products of Wigner matrices also become universal.

\section{Finite-$N$ identity for Wigner and Ginibre matrices:\\ Proof of Theorems \ref{1polynomial} and \ref{2polynomial}} 

The proofs use the independence and simple linear algebra, including the Cauchy-Binet formula. For convenience of notation, let us denote the set of all subsets of $\{1,\ldots, N\}$ with exactly $r$ elements by $\mathcal{K}_{r,N}$. For $K= \{k_1 < \ldots < k_r\}, L=\{\ell_1 < \ldots < \ell_r\} \in \mathcal{K}_{r,N}$ and a matrix $X$ of size $N \times N$ we write the determinant of the corresponding $r\times r$ sub-matrix as follows:
\begin{equation}
\det \left[\begin{matrix} & L\\
K & X
\end{matrix}\right]=\det \left[ \begin{matrix}
x_{k_1,\ell_1}& \ldots& x_{k_1,\ell_r}\\
\vdots& \ddots&\vdots\\
x_{k_r,\ell_1}& \ldots& x_{k_r,\ell_r}\\
\end{matrix} \right] = \det[x_{i,j}]_{i\in K;\, j\in L}
\end{equation}
Thus  the sub-matrix on the right-hand side is obtained from $X$ by choosing the rows with indices $1\leq k_1 < \ldots < k_r \leq N$ and then the columns  
with indices $1\leq \ell_1 < \ldots < \ell_r \leq N$. We begin by introducing  the following Lemma about expectations of two determinants of different sub-matrices of equal size of the same matrix and its adjoint.

\begin{lemma}\label{expectedminorprod}
For $K, \tilde{K}, L, \tilde{L} \in \mathcal{K}_{r,N} $ and $j=1\ldots,N$ we have
\begin{equation}
\label{La1.1}
 \mathbb{E} \left[ \det \left[\begin{matrix} & \tilde{K}\\
	K & X_j
	\end{matrix}\right] \det  \left[\begin{matrix} & L\\
	\tilde{L} & X^*_j
	\end{matrix}\right] \right] =\begin{cases}
	r!\,\sigma_j^{2r}~,\quad \text{if}\quad K=L \quad \text{and}\quad \tilde{K}= \tilde{L} ,\\
	0~, \quad\quad\ \  \text{else}. 
	\end{cases}
\end{equation}	
Moreover, it holds
\begin{equation}
\label{La1.2}
\mathbb{E}  \det \left[\begin{matrix} & L\\
	K & X_j^* X_j 
	\end{matrix}\right]=\begin{cases}
	\frac{N!}{(N-r)!}\,\sigma_j^{2r}~,\quad \text{if}\quad K=L , \\
	0~, \quad \quad\quad\quad\  \text{else}. 
	\end{cases}
\end{equation}	
\end{lemma}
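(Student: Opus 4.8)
The plan is to prove both identities by expanding the relevant sub-determinants through the Leibniz permutation sum and then exploiting the independence and vanishing mean of the entries; identity \eqref{La1.2} will follow from \eqref{La1.1} together with the Cauchy--Binet formula. Throughout I write $K=\{k_1<\cdots<k_r\}$, $\tilde K=\{\tilde k_1<\cdots<\tilde k_r\}$, etc., and denote the entries of $X_j$ by $x^{(j)}_{a,b}$.

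For \eqref{La1.1} I would expand
\[
\det \left[\begin{matrix} & \tilde K\\ K & X_j\end{matrix}\right]
=\sum_{\pi}\operatorname{sgn}(\pi)\prod_{s=1}^r x^{(j)}_{k_s,\tilde k_{\pi(s)}},
\qquad
\det \left[\begin{matrix} & L\\ \tilde L & X^*_j\end{matrix}\right]
=\sum_{\rho}\operatorname{sgn}(\rho)\prod_{t=1}^r \overline{x^{(j)}_{\ell_{\rho(t)},\tilde\ell_t}},
\]
so that the product becomes a double sum over $\pi,\rho$ of $r$ unconjugated entries times $r$ conjugated entries. The key observation is that within \emph{each} minor all $r$ factors sit at distinct positions (the rows are distinct), so no entry is ever squared inside a single minor. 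Taking the expectation and using independence together with $\mathbb{E}[x^{(j)}_{a,b}]=0$, a term survives only if every unconjugated factor is paired with a conjugated factor at the \emph{same} position, i.e.\ the set of positions in the first minor coincides with the set in the second. Comparing the row- and column-index sets then forces $K=L$ and $\tilde K=\tilde L$, which gives the vanishing in the ``else'' case and simultaneously explains why only the second moments $\mathbb{E}[|x^{(j)}_{a,b}|^2]=\sigma_j^2$ can enter, so that the real and complex cases yield the same answer.

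In the surviving case $K=L$, $\tilde K=\tilde L$, I would check that the position-matching constraint pins down $\rho=\pi^{-1}$, whence $\operatorname{sgn}(\pi)\operatorname{sgn}(\rho)=1$ and each admissible $\pi$ contributes $\prod_{s}\mathbb{E}[|x^{(j)}_{k_s,\tilde k_{\pi(s)}}|^2]=\sigma_j^{2r}$. Summing over the $r!$ permutations $\pi$ produces $r!\,\sigma_j^{2r}$, as claimed.

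For \eqref{La1.2} I would apply the Cauchy--Binet formula to the minor of the product,
\[
\det \left[\begin{matrix} & L\\ K & X_j^* X_j\end{matrix}\right]
=\sum_{S\in\mathcal{K}_{r,N}}
\det \left[\begin{matrix} & S\\ K & X_j^*\end{matrix}\right]
\det \left[\begin{matrix} & L\\ S & X_j\end{matrix}\right],
\]
take the expectation inside the finite sum, and invoke \eqref{La1.1} termwise. Matching the index sets shows that each summand vanishes unless $K=L$, which settles the ``else'' case; when $K=L$ each of the $\binom{N}{r}$ summands equals $r!\,\sigma_j^{2r}$ independently of $S$, and $\binom{N}{r}\,r!=N!/(N-r)!$ yields the stated value. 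The only step requiring genuine care is the pairing argument in \eqref{La1.1}---correctly identifying which permutations survive and verifying $\rho=\pi^{-1}$ so as to obtain the exact constant $r!$---but this is purely combinatorial and poses no essential obstacle.
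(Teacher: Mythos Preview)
Your argument is correct and follows essentially the same route as the paper: Leibniz expansion of both minors, the pairing/independence argument to force $K=L$ and $\tilde K=\tilde L$, identification of the surviving permutations to obtain $r!\,\sigma_j^{2r}$, and then Cauchy--Binet combined with \eqref{La1.1} for \eqref{La1.2}. The only cosmetic difference is that the paper parametrises the second Leibniz sum so that the surviving constraint reads $\pi=\sigma$ rather than your $\rho=\pi^{-1}$; the content is identical.
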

\begin{proof}
	We have
\begin{eqnarray}
&&	\mathbb{E} \left[ \det \left[\begin{matrix} & \tilde{K}\\
	K & X_j
	\end{matrix}\right] \det  \left[\begin{matrix} & L\\
	\tilde{L} & X^*_j
	\end{matrix}\right] \right] = \mathbb{E} \left[ \det \left[\begin{matrix} & \tilde{K}\\
	K & X_j
	\end{matrix}\right] \overline{\det  \left[\begin{matrix} & \tilde{L}\\
		L & X_j
		\end{matrix}\right]} \right]
		\nonumber\\
		&&= \sum_{\pi, \sigma \in S_{r}} \mathrm {sign} (\pi) ~  \mathrm {sign} (\sigma)~ \mathbb{E}\left[
		\prod_{n=1}^r x^{(j)}_{k_n,\pi(\tilde{k}_n)}\prod_{m=1}^r \overline{x^{(j)}_{\ell_m,\sigma(\tilde{\ell}_m)}}\right].
		\label{E2dets}
\end{eqnarray}	

	First, we assume that $K\neq L$ or $\tilde{K} \neq \tilde{L}$. In writing both determinants as sums according to Leibniz' rule and expanding the product, every summand of the resulting sum is a product of entries  of $X_j$ {\it without repetition}. That means, due to $K\neq L$ or $\tilde{K} \neq \tilde{L}$ there will be at least one mismatch in the first or second index pairs.
  Using the independence of all entries, we conclude that every summand vanishes in expectation,  so that we obtain the second case in \eqref{La1.1}.

	Now let us assume $K=L$ and $\tilde{K}=\tilde{L}$. In this case we can write \eqref{E2dets} as 
	\begin{align*}&\mathbb{E} \left[ \det \left[\begin{matrix} & \tilde{K}\\
	K & X_j
	\end{matrix}\right] \det  \left[\begin{matrix} & K\\
	\tilde{K} & X^*_j
	\end{matrix}\right] \right]
	=\sum_{\pi \in S_{r}} \mathbb{E}\left[\left\vert x^{(j)}_{k_1,\pi(\tilde{k}_1)}\right\vert^2\right] \cdots  \mathbb{E}\left[\left\vert x^{(j)}_{k_r,\pi(\tilde{k}_r)}\right\vert^2\right]
	=r!\,\sigma_j^{2r}\ ,
	\end{align*}
	where we used that all summands with permutations $\pi \neq \sigma$ vanish. This shows Eq. \eqref{La1.1}. 
	
	To see the second part Eq. \eqref{La1.2}, we write $\mathcal{N} = \{1,\ldots,N\}$ and observe
\begin{equation*}
\mathbb{E}  \det \left[\begin{matrix} & L\\
	K & X_j^* X_j 
	\end{matrix}\right]= \mathbb{E}  \det \left[\begin{pmatrix} & \mathcal{N}\\
	K & X_j^* 
	\end{pmatrix}\begin{pmatrix} & L\\
	\mathcal{N} & X_j 
	\end{pmatrix}\right].
\end{equation*}	
Notice that on the left-hand side the sub-matrix of $X_j^*X_j$ is of size $r\times r$, whereas the matrices multiplied on the right-hand side are of sizes $r\times N$ and $N\times r$, respectively.
	For a set of indices $1\leq \nu_1 < \ldots < \nu_r \leq N$ we write $V=\{\nu_1,\ldots,\nu_r\}$, and an application of the Cauchy-Binet formula gives
 \begin{equation}\label{CBapp}
 	\mathbb{E}  \det \left[\begin{matrix} & L\\
	K & X_j^* X_j 
	\end{matrix}\right]=  \sum_{1\leq \nu_1 <\ldots < \nu_r \leq N} \mathbb{E}  \det \left[\begin{pmatrix} & V\\
	K & X_j^* 
	\end{pmatrix}\begin{pmatrix} & L\\
	V & X_j 
	\end{pmatrix}\right].
 \end{equation}
By part one of the lemma, Eq. \eqref{La1.1}, we know that for every summand 
with set of indices $K,V$, and $V,L$ 
we obtain a non-vanishing contribution $r!\, \sigma_j^{2r}$ for $K=L$ only, and thus  
\begin{equation*}	
	\mathbb{E}  \det \left[\begin{matrix} & K\\
	K & X_j^* X_j 
	\end{matrix}\right]= \binom{N}{r} r!\, \sigma_j^{2r}= \frac{N!}{(N-r)!}\, \sigma_j^{2r}\ .
	\end{equation*}
	\end{proof}
Now we can turn to the derivation of the explicit expression for the average of a single characteristic polynomial.
\begin{proof}[Proof of Theorem \ref{1polynomial}]
	We start by deriving the expression given in the first statement of Eq. \eqref{M-Wigner-prod}. Therefore, we expand the characteristic polynomial into powers of $x$ by expressing the corresponding coefficients in terms of the principal minors 
		\begin{align*}
	&\det\left[xI_N- X_M^*\cdots X_1^* X_1\cdots X_M \right] \\
	&
	= \sum_{\nu=0}^N (-1)^{N-\nu} x^{\nu} \sum_{K^{(1)} \in \mathcal{K}_{N-\nu,N}} \det \left[\begin{matrix} & K^{(1)}\\
	K^{(1)} & X_M^*\cdots X_1^* X_1\cdots X_M
	\end{matrix}\right].  
 	\end{align*}
	The minors of the product on the right-hand side can be expanded by means of an iterated application of the Cauchy-Binet formula \eqref{CBapp}. This way we obtain
	\begin{align*}
	&\det\left[xI_N- X_M^*\cdots X_1^* X_1\cdots X_M \right] \\
	&= \sum_{\nu=0}^N (-1)^{N-\nu} x^{\nu} \sum_{K^{(1)},\ldots, K^{(2M)}  \in \mathcal{K}_{N-\nu,N}}  \det \left[\begin{matrix} & K^{(2)}\\
	K^{(1)} & X_M^*
	\end{matrix} \right] \det \left[\begin{matrix} & K^{(3)}\\
	K^{(2)} & X_{M-1}^*
	\end{matrix}\right] \cdots\\
	& \hspace{5cm} 
	\times  \det \left[\begin{matrix} & K^{(M+1)} \\
	K^{(M)} & X_1^*
	\end{matrix}\right] \det \left[\begin{matrix} & K^{(M+2)} \\
	K^{(M+1)} & X_1
	\end{matrix}\right]
	\\&\hspace{5cm} \cdots \times  
	\det \left[\begin{matrix} & K^{(2M)} \\
	K^{(2M-1)} & X_{M-1}
	\end{matrix}\right] \det \left[\begin{matrix} & K^{(1)} \\
	K^{(2M)} & X_{M}
	\end{matrix}\right].
	\end{align*}
	We reorder the factors in the product of the $2M$ determinants on the right-hand side 
by pairing matrices $X_j^*$ and $X_j$ and take the expectation to obtain
\begin{align*}
&\mathbb{E} \det\left[xI_N- X_M^*\cdots X_1^* X_1\cdots X_M \right] \\
&= \sum_{\nu=0}^N (-1)^{N-\nu} x^{\nu} \sum_{K^{(1)},\ldots, K^{(2M)}  \in \mathcal{K}_{N-\nu,N}} \mathbb{E} \left[ \det \left[\begin{matrix} & K^{(2)}\\
K^{(1)} & X_M^*
\end{matrix} \right] \det \left[\begin{matrix} & K^{(1)}\\
K^{(2M)} & X_M
\end{matrix} \right] \right] \\
&\hspace{5cm} \times  \mathbb{E} \left[ \det \left[\begin{matrix} & K^{(3)}\\
K^{(2)} & X_{M-1}^*
\end{matrix} \right] \det \left[\begin{matrix} & K^{(2M)}\\
K^{(2M-1)} & X_{M-1}
\end{matrix} \right] \right] \\
&\hspace{5cm} \times \cdots \times \mathbb{E} \left[ \det \left[\begin{matrix} & K^{(M)}\\
K^{(M-1)} & X_2^*
\end{matrix} \right] \det \left[\begin{matrix} & K^{(M+3)}\\
K^{(M+2)} & X_2
\end{matrix} \right]  \right] \\
&\hspace{5cm} \times \mathbb{E} \left[ \det \left[\begin{matrix} & K^{(M+1)}\\
K^{(M)} & X_{1}^*
\end{matrix} \right] \det \left[\begin{matrix} & K^{(M+2)}\\
K^{(M+1)} & X_{1}
\end{matrix} \right] \right],
\end{align*}
where we also used the independence of the matrices in order to distribute the expectation over pairwise matching factors. Now we are able
 to apply Lemma \ref{expectedminorprod}. Thus, we first see that a summand vanishes as soon as one of the conditions $K^{(j)} = K^{(2M+2-j)}$, $j=1,\ldots, M$, is not satisfied. 
Evaluating the expectations explicitly we obtain
\begin{align*}
&\mathbb{E} \det\left[xI_N- X_M^*\cdots X_1^* X_1\cdots X_M \right] \\
&= \sum_{\nu=0}^N (-1)^{N-\nu} x^{\nu} \sum_{K^{(1)},\ldots, K^{(M+1)}  \in \mathcal{K}_{N-\nu,N}} \left((N-\nu)!\right)^M(\sigma_M\cdots\sigma_1)^{2(N-\nu)}\\
&= \sum_{\nu=0}^N (-1)^{N-\nu} x^{\nu} \binom{N}{N-\nu}^{M+1} \left((N-\nu)!\right)^M\tau_M^{2(N-\nu)},
\end{align*}
which now leads to the claimed expression.

To derive the second average characteristic polynomial in Eq. \eqref{M-mixed-prod} we can proceed analogously. We first expand the polynomial in terms of principal minors, which in turn can be expanded by means of the Cauchy-Binet formula in the following way
\begin{align*}
&\mathbb{E}\left[
\det\left[xI_N-\left(X_1^{*}X_1\right)\ldots \left(X_M^{*}X_M\right)\right]
\right] 
\\
&= \sum_{\nu=0}^N (-1)^{N-\nu} x^{\nu} 
\sum_{K^{(1)},\ldots, K^{(M)}  \in \mathcal{K}_{N-\nu,N}}    
\mathbb{E} \left[
\det \left[\begin{matrix} & K^{(2)}\\
K^{(1)} & X_1^* X_1 
\end{matrix}\right] \ldots \det \left[\begin{matrix} & K^{(1)}\\
K^{(M)} & X_M^* X_M 
\end{matrix}\right]\right].
\end{align*}
We can use the independence of the matrices $X_j$ again in order to distribute the expectations over the factors, and observe that by Lemma \ref{expectedminorprod} we only have contributions from index sets with $K^{(1)}= K^{(2)} = \cdots = K^{(M)}$. Hence, we obtain
\begin{align*}
\mathbb{E}\left[\det\left[xI_N-\left(X_1^{*}X_1\right)\ldots \left(X_M^{*}X_M\right)\right]\right] &= \sum_{\nu=0}^N (-1)^{N-\nu} x^{\nu} \sum_{K^{(1)}\in \mathcal{K}_{N-\nu,N}} \left(\frac{N!}{\nu!}\right)^M\!(\sigma_1\cdots\sigma_M)^{2(N-\nu)}\\
&=\sum_{\nu=0}^N (-1)^{N-\nu} x^{\nu} \binom{N}{\nu} \left(\frac{N!}{\nu!}\right)^M\tau_M^{2(N-\nu)}, 
\end{align*}
from which the second statement Eq. \eqref{M-mixed-prod} follows.
\end{proof}

Let us turn to the average over two characteristic polynomials, related to the complex eigenvalues of the product matrix.
\begin{proof}[Proof of Theorem \ref{2polynomial}]
	We first deal with one of the determinants in Eq. \eqref{M-Wigner-kernel}, i.e., we have
	\begin{align*}
	\det\left[zI_N- X_1\cdots X_M\right] = \sum_{\nu=0}^N (-1)^{N-\nu} z^{\nu} \sum_{K^{(1)} \in \mathcal{K}_{N-\nu,N}} \det \left[\begin{matrix} & K^{(1)}\\
	K^{(1)} & X_1\cdots X_M
	\end{matrix}\right].  
	\end{align*}
	The minors of the product on the right-hand side can be expanded by means of an iterated application of the Cauchy-Binet formula \eqref{CBapp}. This way we obtain
	\begin{align*}
	&\det\left[zI_N- X_1\cdots X_M\right] \\
	&= \sum_{\nu=0}^N (-1)^{N-\nu} z^{\nu}\! \sum_{K^{(1)},\ldots, K^{(M)}  \in \mathcal{K}_{N-\nu,N}}\!  \det\! \left[\begin{matrix} & K^{(2)}\\
	K^{(1)} & X_1
	\end{matrix} \right]\! \det\! \left[\begin{matrix} & K^{(3)}\\
	K^{(2)} & X_2
	\end{matrix}\right]\cdots
	\det\! \left[\begin{matrix} & K^{(1)} \\
	K^{(M)} & X_M
	\end{matrix}\right]\!,
	\end{align*}
and 	similarly for $\det\left[wI_N- \left(X_1\cdots X_M\right)^*\right]$. We thus have for their average
	\begin{align*}
	&\mathbb{E}\left[\det\left[zI_N- X_1\cdots X_M\right] \det\left[wI_N- \left(X_1 \cdots X_M\right)^*\right]\right] \\
	& = \sum_{\nu, \mu = 0}^N (-1)^{\nu + \mu} z^{\nu} w^{\mu} \sum_{K^{(1)},\ldots, K^{(M)}  \in \mathcal{K}_{N-\nu,N}} \sum_{L^{(1)},\ldots, L^{(M)}  \in \mathcal{K}_{N-\mu,N}} \\
	& ~~~~
	\quad\mathbb{E} \left[ \det \left[\begin{matrix} & K^{(2)}\\
	K^{(1)} & X_1
	\end{matrix} \right] \det \left[\begin{matrix} & L^{(1)} \\
	L^{(2)} & X^*_1
	\end{matrix}\right] \right] \cdots \mathbb{E} \left[ \det \left[\begin{matrix} & K^{(1)}\\
	K^{(M)} & X_M
	\end{matrix} \right]  \det \left[\begin{matrix} & L^{(M)} \\
	L^{(1)} & X^*_M
	\end{matrix}\right]\right],
	\end{align*}
	using the independence of the matrices $X_1, \ldots, X_M$.
	If $\nu \neq \mu$, then for every index $j=1,\ldots, N-1$ we find that in one of the two matrices
	$\left[\begin{matrix} & K^{(j+1)} \\
	K^{(j)} & X_j
	\end{matrix}\right]$ and $\left[\begin{matrix} & L^{(j)} \\
	L^{(j+1)} & X^*_j
	\end{matrix}\right]$
    there are entries which do not appear in the other matrix.
	Thus, the expectation of the determinant of these two matrices  vanishes in this case, and using Lemma \ref{expectedminorprod}, we obtain, 
	\begin{align*}
	&\mathbb{E}\left[\det\left[zI_N- X_1\cdots X_M\right] \det\left[wI_N- \left(X_1 \cdots X_M\right)^*\right]\right] \\
	&= \sum_{\nu=0}^N (zw)^{\nu} \sum_{K^{(1)},\ldots, K^{(M)} \in \mathcal{K}_{N-\nu, N}} \left((N-\nu)!\right)^M(\sigma_1\cdots\sigma_M)^{2(N-\nu)}\\
	&=\sum_{\nu=0}^N (zw)^{\nu} \left((N-\nu)!\right)^M \binom{N}{N-\nu}^M\tau_M^{2(N-\nu)}.
	\end{align*}
\end{proof}

\section{Large-$M$ asymptotic of the zeros and Lyapunov spectrum}
\label{Lyapunov}

For simplicity, we first study the case  with unit variance $\sigma_k=1$ for $k=1,\ldots,M$.
We are interested in the behaviour for large $M$ and fixed dimensions $N$ of the suitably rescaled 
zeros of \eqref{zerosOP}
\begin{equation}
\frac{1}{2M}\log\left( z_j^{(M)}\right), \quad j=1,\ldots,N,
\end{equation}
since in this rescaling the zeros correspond to the incremental Lyapunov exponents \eqref{Lincrement}.
To this end, we study the behaviour of the quantities$ (z_j^{(M)})^{\frac{1}{2M}}, \quad j=1,\ldots,N,$
as $M\to \infty$. In order to achieve this we consider the accordingly rescaled polynomials
\begin{equation}
\mathcal{P}_N^{(M)}(z):=\sum_{k=0}^N \binom{N}{k}(-1)^k \left(\frac{z^{2k}}{k!}\right)^{M},\quad z\in \mathbb{C}.
\end{equation}
This rescaling introduces many additional zeros in the complex plane, however, this happens in a regular way and later we will be interested in comparing the positive zeros only to the incremental Lyapunov exponents.  

First, we study the asymptotic behaviour of the polynomials $\mathcal{P}_N^{(M)}(z)$ on the complex plane staying away from the circular domains 
\begin{equation}
C_{j,\epsilon} := \left\{z\in \mathbb{C}~: ~ j-\epsilon < \vert  z\vert^2 < j+\epsilon \right\}, \quad j=1,\ldots,N,
\end{equation}
for which we choose a fixed small $\epsilon >0$. In view of the facts that we have an analytically convenient explicit expression of the polynomials $\mathcal{P}_N^{(M)}$ and that the dimension $N$ remains fixed this can be done using elementary arguments and tools from complex analysis.
\begin{prop}\label{asymptoticsplane}
	We have for fixed $N$ and small $\epsilon >0$
	\begin{equation}\label{strongasymp}
	\mathcal{P}_N^{(M)}(z)=\binom{N}{\nu_z} (-1)^{\nu_z} \left(\frac{z^{2\nu_z}}{(\nu_z)!}\right)^M \left(1+\mathcal{O}\left(q^M\right) \right),\quad M\to\infty,\end{equation}
	uniformly in 
	$z\in \left(\mathbb{C}\cup \{\infty\}\right)\backslash \left(\cup_{j=1}^N C_{j,\epsilon} \right),$
	where $q:=q_{N,\epsilon}:=\frac{N}{N+\epsilon} \in (0,1)$ and the index $\nu_z$ is given by
\begin{equation}
\nu_z := \begin{cases}
	\lfloor \vert z \vert^2 \rfloor , \quad \vert z \vert^2 < N,\\
	N, \quad\quad\ \,  \vert z\vert^2 >N.
	\end{cases}
\end{equation}	
\end{prop}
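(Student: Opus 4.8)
The plan is to show that the sum defining $\mathcal{P}_N^{(M)}(z)$ is dominated by the single term with index $k=\nu_z$, all remaining terms being exponentially smaller in $M$. Writing $T_k(z)=\binom{N}{k}(-1)^k(z^{2k}/k!)^M$ for the $k$-th summand and $a_k=|z|^{2k}/k!$, so that $|T_k|=\binom{N}{k}a_k^M$, the first step is to locate the maximal $a_k$. Since $a_{k+1}/a_k=|z|^2/(k+1)$, the sequence $(a_k)_{k=0}^N$ increases while $k+1<|z|^2$ and decreases thereafter; a short check shows the maximum over $k\in\{0,\ldots,N\}$ is attained precisely at $k=\nu_z$, matching the definition of $\nu_z$ both for $|z|^2<N$ (maximum at $\lfloor|z|^2\rfloor$, using that integers in $\{1,\ldots,N\}$ lie in the excluded discs) and for $|z|^2>N$, where $(a_k)$ is increasing throughout and the maximum sits at $k=N$.

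The heart of the argument is a uniform geometric bound on the competing terms. Restricting $z$ to the complement of the $C_{j,\epsilon}$ confines $|z|^2$ to one of the bands $[m+\epsilon,m+1-\epsilon]$ (with $[0,1-\epsilon]$ and $[N+\epsilon,\infty)$ as extreme cases), on which $\nu_z=m$ is constant. I would first estimate the two neighbouring ratios, $a_{m+1}/a_m=|z|^2/(m+1)\le(m+1-\epsilon)/(m+1)$ and $a_{m-1}/a_m=m/|z|^2\le m/(m+\epsilon)$, and then verify that both are bounded by $q=N/(N+\epsilon)$ for every admissible $m$: the descending ratio $m/(m+\epsilon)$ is increasing in $m$ and is at most $N/(N+\epsilon)=q$, while the ascending ratio $1-\epsilon/(m+1)$ is increasing in $m\le N-1$ and is at most $1-\epsilon/N\le q$. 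Because the factors $|z|^2/i$ for $i>m$ and the factors $i/|z|^2$ for $i\le m$ decrease as $i$ moves away from $\nu_z$, telescoping then yields the clean estimate $a_k/a_{\nu_z}\le q^{|k-\nu_z|}$ for all $k\ne\nu_z$, uniformly over the admissible region.

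With this in hand I would factor out the dominant term, writing $\mathcal{P}_N^{(M)}(z)=T_{\nu_z}(z)\left(1+R_M(z)\right)$ with $R_M(z)=\sum_{k\ne\nu_z}T_k/T_{\nu_z}$. Bounding $|R_M(z)|\le\sum_{k\ne\nu_z}\binom{N}{k}\left(a_k/a_{\nu_z}\right)^M\le\left(\sum_{k=0}^N\binom{N}{k}\right)q^M=2^Nq^M$, since $\binom{N}{\nu_z}\ge1$ and $a_k/a_{\nu_z}\le q$, gives $R_M(z)=\mathcal{O}(q^M)$ with a constant depending only on $N$, which is exactly the claimed uniformity. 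The extreme cases fit the same scheme: for $\nu_z=0$ only the neighbour $k=1$ occurs with $a_1/a_0=|z|^2\le1-\epsilon\le q$; for $\nu_z=N$ only $k=N-1$ occurs with $a_{N-1}/a_N=N/|z|^2\le q$; and letting $|z|\to\infty$ keeps $\nu_z=N$ with ratios only shrinking, so the estimate extends continuously to $z=\infty$ on the Riemann sphere. Since $T_{\nu_z}(z)=\binom{N}{\nu_z}(-1)^{\nu_z}(z^{2\nu_z}/(\nu_z)!)^M$ is the asserted prefactor and is nonvanishing on the region, this establishes \eqref{strongasymp}.

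The step I expect to be most delicate is the uniform neighbour estimate with the exact constant $q=N/(N+\epsilon)$: one must confirm that the worst case across all bands $m=0,\ldots,N$ and across both the ascending and descending sides genuinely saturates at $q$, rather than at some $m$-dependent value, so that the resulting error is truly uniform in $z$ and does not degrade near the edges of the excluded annuli. Everything else is a routine telescoping and counting argument.
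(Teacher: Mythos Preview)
Your proposal is correct and follows essentially the same approach as the paper: identify the unimodal maximum of $a_k=|z|^{2k}/k!$ at $k=\nu_z$, bound each ratio $a_k/a_{\nu_z}$ (raised to the $M$th power) by $q^M$ via case analysis on the band containing $|z|^2$, factor out the dominant term, and sum the remainders. Your telescoping argument yielding $a_k/a_{\nu_z}\le q^{|k-\nu_z|}$ is slightly sharper and more uniformly phrased than the paper's direct product estimates, which only extract a single factor of $q$ from each ratio, but this extra precision is not used in the end and the two arguments are otherwise the same.
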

\begin{proof}
	We note that the sequence $\frac{\vert z \vert^{2k}}{k!}, k=0,\ldots,N,$ is unimodal with a unique maximum at 
\begin{equation*}
 \begin{cases}
 0 , \quad \text{if}~ \vert z \vert^2 < 1,\\
 \nu, \quad \text{if}~ \nu < \vert z \vert^2 < \nu +1 \quad \text{for some}\quad \nu\in\{1,\ldots,N-1\},\\
 N, \quad \vert z\vert^2 >N.
 \end{cases}
\end{equation*} 
This suggests to consider the index $\nu_z$, so that this sequence 
strictly increases up to the index $\nu_z$, and strictly decreases afterwards.
 
 First, let us consider $\vert z\vert^2 \leq 1-\epsilon$, then we have 
\begin{equation*}
 \left\vert \mathcal{P}_N^{(M)}(z) -1 \right\vert \leq \sum_{k=1}^N \binom{N}{k}\left(\frac{\vert z\vert^{2k}}{k!}\right)^M \leq C_{N}(1-\epsilon)^{M}=\mathcal{O}\left(q^M\right),
\end{equation*} 
 as $M\to\infty$, where $C_{N}$ is some positive constant depending on $N$ only.
 Next, let us consider the case $\nu +\epsilon\leq \vert z\vert^2\leq \nu+1 -\epsilon $ for some $\nu\in\{1,\ldots,N-1\}$. Then the sequence $\frac{\vert z \vert^{2k}}{k!}, k=0,\ldots,N,$ attains its unique maximum at $k=\nu$. Moreover, for $k\neq\nu$ we can estimate the following quotients 
 for $k<\nu$ by
 \begin{eqnarray}
  \frac{\binom{N}{k} \left(\frac{\vert z \vert^{2k}}{k!}\right)^M }{\binom{N}{\nu} \left(\frac{\vert z \vert^{2\nu}}{\nu!}\right)^M}
  &=&\frac{(N-\nu)! \nu!}{(N-k)!k!}\left(\frac{\nu!}{k!} \left(\frac{1}{\vert z \vert^2}\right)^{\nu-k}\right)^M\leq  \frac{(N-\nu)! \nu!}{(N-k)!k!} \left(\frac{\nu(\nu-1)\cdots(k+1)}{(\nu+\epsilon)^{\nu-k}}\right)^M 
  \nonumber\\
 &\leq& \frac{(N-\nu)! \nu!}{(N-k)!k!}\left(\frac{N}{N+\epsilon}\right)^M=\mathcal{O}\left(q^M\right),\nonumber
 \end{eqnarray}
 as $M\to\infty$, and for $k>\nu$ in a similar way by
 \begin{align*}&\frac{(N-\nu)! \nu!}{(N-k)!k!}\left(\frac{\nu!}{k!} \vert z\vert^{2(k-\nu)}\right)^M\leq  \frac{(N-\nu)! \nu!}{(N-k)!k!} \left(\frac{(\nu+1-\epsilon)^{k-\nu}}{k(k-1)\cdots(\nu+1)}\right)^M\\
 &\leq 
 \frac{(N-\nu)! \nu!}{(N-k)!k!} \left(\frac{(\nu+1-\epsilon)^{k-\nu}}{(\nu+1)^{k-\nu}}\right)^M
 \leq \frac{(N-\nu)! \nu!}{(N-k)!k!}\left(\frac{N-\epsilon}{N}\right)^M\\
 &\leq \frac{(N-\nu)! \nu!}{(N-k)!k!} \left(\frac{N}{N+\epsilon}\right)^M =\mathcal{O}\left(q^M\right),
 \end{align*}
 as $M\to\infty$, uniformly in $\nu +\epsilon\leq \vert z\vert^2\leq \nu+1 -\epsilon $. Hence, in this region we have
 \begin{align*}
 \mathcal{P}_N^{(M)}(z)&=\binom{N}{\nu}(-1)^{\nu}\left(\frac{z^{2\nu}}{\nu!}\right)^M + \sum_{k=0,k\neq\nu}^N\binom{N}{k}(-1)^k \left(\frac{z^{2k}}{k!}\right)^M\\
 &=\binom{N}{\nu}(-1)^{\nu}\left(\frac{z^{2\nu}}{\nu!}\right)^M\left(1+\sum_{k=0,k\neq\nu}^N \frac{\binom{N}{k}(-1)^k \left(\frac{z^{2k}}{k!}\right)^M}{\binom{N}{\nu}(-1)^\nu \left(\frac{z^{2\nu}}{\nu!}\right)^M} \right).
 \end{align*}
The sum can be estimated by
\begin{equation*}
\left\vert \sum_{k=0,k\neq\nu}^N \frac{\binom{N}{k}(-1)^k \left(\frac{z^{2k}}{k!}\right)^M}{\binom{N}{\nu}(-1)^\nu \left(\frac{z^{2\nu}}{\nu!}\right)^M}\right\vert \leq \sum_{k=0,k\neq\nu}^N\frac{(N-\nu)!\nu!}{(N-k)!k!}\left(\frac{N}{N+\epsilon}\right)^M=\mathcal{O}\left(q^M\right)\ , 
\end{equation*}
 as $M\to\infty$, uniformly in $\nu +\epsilon\leq \vert z\vert^2\leq \nu+1 -\epsilon $.
 Furthermore, if $\vert z \vert^2 \geq N+\epsilon $ (including the point at infinity) we have
\begin{equation*}
 \left\vert \frac{P_N^{(M)}(z)}{\binom{N}{N}(-1)^N \left(\frac{z^{2N}}{N!}\right)^M}-1\right\vert\leq \sum_{k=0}^{N-1}\binom{N}{k}\left(\frac{N!}{k!}\left(\frac{1}{\vert z\vert^2}\right)^{N-k}\right)^M,
\end{equation*} 
 which is less or equal than
\begin{equation*}
 \sum_{k=0}^{N-1}\binom{N}{k}\left(\frac{N!}{k!}\left(\frac{1}{N+\epsilon}\right)^{N-k}\right)^M=\sum_{k=0}^{N-1}\binom{N}{k}\left(\frac{N(N-1)\cdots(k+1)}{(N+\epsilon)^{N-k}}\right)^M.
\end{equation*} 
We can estimate this further by
\begin{equation*}
 \sum_{k=0}^{N-1}\binom{N}{k}\left(\frac{N}{N+\epsilon}\right)^M=\mathcal{O}\left(q^M\right),
\end{equation*} 
 as $M\to\infty$, uniformly in $\vert z \vert^2 \geq N+\epsilon $.
 Collecting these asymptotics in all the regions gives the statement in \eqref{strongasymp}.
\end{proof}
It follows from the asymptotic relation \eqref{strongasymp} that all zeros of $\mathcal{P}_N^{(M)}$ for large values of $M$ accumulate near the circles around the origin with radii $1,\sqrt{2},\ldots,\sqrt{N}$. 

Next, we show that every point of these circles indeed is a limit point of the zeros, from which we can deduce that the zeros converge weakly to the uniform distribution on the union of these circles. Due to the specific rescaling of the polynomials it is sufficient to show that every point $1,\sqrt{2},\ldots,\sqrt{N}$ is a limit point of the zeros. To this end we study the behaviour of the polynomials $\mathcal{P}_N^{(M)}$ in the neighbourhood of these points.
\begin{prop}\label{asymptoticlocal}
	Let us consider a fixed $\nu \in \{1,\ldots,N\}$ and a fixed $N$. Then we have for some $q\in (0,1)$
	\begin{align}\label{localasym}
	\mathcal{P}_N^{(M)}\left(\sqrt{\nu + \frac{w}{M}}\right)=&\binom{N}{\nu-1}(-1)^{\nu-1}\left(\frac{\left(\nu+\frac{w}{M}\right)^{\nu-1}}{(\nu-1)!}\right)^M\\\nonumber
	\qquad& \times \left(1-\frac{N+1-\nu}{\nu}\left(1+\frac{w}{\nu M}\right)^M+\mathcal{O}\left(q^M\right)\right),
	\end{align}
	as $M\to\infty$, uniformly in $w$ on compact subsets of the complex plane.
\end{prop}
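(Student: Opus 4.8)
The plan is to exploit the fact that the local scaling $z^2 = \nu + w/M$ sits exactly at the point where the unimodal sequence $a_k := |z|^{2k}/k!$, already analysed in the proof of Proposition \ref{asymptoticsplane}, changes its maximiser. Indeed, at $|z|^2 = \nu$ the ratio of consecutive terms $a_{k+1}/a_k = \nu/(k+1)$ equals $1$ precisely at $k = \nu-1$, so the two terms $k = \nu-1$ and $k = \nu$ are \emph{equal} in modulus and strictly dominate all the others (the sequence increases strictly up to $\nu-1$, is flat between $\nu-1$ and $\nu$, and decreases strictly afterwards). First I would substitute $z^2 = \nu + w/M$ into the definition of $\mathcal{P}_N^{(M)}$ and isolate these two leading terms
\begin{equation*}
T_{\nu-1} = \binom{N}{\nu-1}(-1)^{\nu-1}\left(\frac{(\nu+\tfrac{w}{M})^{\nu-1}}{(\nu-1)!}\right)^M, \qquad T_\nu = \binom{N}{\nu}(-1)^{\nu}\left(\frac{(\nu+\tfrac{w}{M})^{\nu}}{\nu!}\right)^M,
\end{equation*}
noting that the prefactor in \eqref{localasym} is exactly $T_{\nu-1}$, and that $\nu\in\{1,\dots,N\}$ guarantees both indices lie in the summation range.

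The second step is a direct algebraic computation of the ratio $T_\nu/T_{\nu-1}$. Using $\binom{N}{\nu}/\binom{N}{\nu-1} = (N+1-\nu)/\nu$ and cancelling the common powers one finds
\begin{equation*}
\frac{T_\nu}{T_{\nu-1}} = -\frac{N+1-\nu}{\nu}\left(\frac{\nu+\tfrac{w}{M}}{\nu}\right)^M = -\frac{N+1-\nu}{\nu}\left(1+\frac{w}{\nu M}\right)^M,
\end{equation*}
which reproduces the middle term in the claimed bracket. Factoring $T_{\nu-1}$ out of $\mathcal{P}_N^{(M)}$ then yields $T_{\nu-1}\bigl(1 + T_\nu/T_{\nu-1} + \sum_{k\ne\nu-1,\nu} T_k/T_{\nu-1}\bigr)$, so it only remains to absorb the remaining sum into the error $\mathcal{O}(q^M)$.

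For the last step I would bound each ratio $T_k/T_{\nu-1}$ with $k \ne \nu-1,\nu$. Taking moduli, its base equals $\tfrac{(\nu-1)!}{k!}|\nu + \tfrac{w}{M}|^{k-\nu+1}$, which converges as $M\to\infty$ to $a_k/a_{\nu-1} = \tfrac{(\nu-1)!}{k!}\nu^{k-\nu+1}$, uniformly for $w$ in a fixed compact set since $\nu + w/M \to \nu$. By the strict unimodality recorded above this limit is $<1$ for every $k \ne \nu-1,\nu$; hence for all large $M$ and all such $k$ the base is bounded by a single constant $q\in(0,1)$ (for instance the maximum over the finitely many $k$ of the midpoints between $a_k/a_{\nu-1}$ and $1$), giving $|T_k/T_{\nu-1}| = \mathcal{O}(q^M)$ uniformly. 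Summing the finitely many such terms preserves the bound and completes the argument. The only genuine care needed — and thus the main obstacle — lies in this uniform control over $w$ on compact sets: one must ensure the base stays bounded away from $1$ uniformly in $M$ large, which follows from the continuity of $w \mapsto |\nu + w/M|$ together with the strict separation of the dominant pair from the rest of the sequence. This mirrors, at the local scale, the ratio estimates already carried out in the proof of Proposition \ref{asymptoticsplane}.
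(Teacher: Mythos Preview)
Your proposal is correct and follows essentially the same approach as the paper: substitute $z^2=\nu+w/M$, split off the two dominant terms $k=\nu-1$ and $k=\nu$, compute their ratio to obtain the factor $-\tfrac{N+1-\nu}{\nu}(1+\tfrac{w}{\nu M})^M$, and control the remaining $k$'s by the same ratio estimates used in Proposition~\ref{asymptoticsplane}. Your treatment of the remainder is in fact slightly more explicit than the paper's, which simply refers back to those earlier estimates.
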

\begin{proof}
	We have, using the explicit representation of $\mathcal{P}_N^{(M)}$,
	\begin{align*}
&	\mathcal{P}_N^{(M)}\left(\sqrt{\nu + \frac{w}{M}}\right) = \binom{N}{\nu-1}(-1)^{\nu-1}\left(\frac{(\nu+\frac{w}{M})^{\nu-1}}{(\nu-1)!}\right)^{M}+\binom{N}{\nu}(-1)^{\nu}\left(\frac{(\nu+\frac{w}{M})^{\nu}}{\nu!}\right)^{M}\\
	&\quad\quad\quad\quad\quad\quad\quad\quad\quad+\sum_{k=0..N,k\neq \nu, \nu-1}\binom{N}{k}(-1)^k \left(\frac{\left(\nu+\frac{w}{M}\right)^{k}}{k!}\right)^M\\
	=&\binom{N}{\nu-1}(-1)^{\nu-1}\left(\frac{(\nu+\frac{w}{M})^{\nu-1}}{(\nu-1)!}\right)^{M}\\
	&\times\left(1-\frac{N+1-\nu}{\nu}\left(1+\frac{w}{\nu M}\right)^M+\sum_{k=0..N,k\neq \nu, \nu-1}\frac{\binom{N}{k}(-1)^k \left(\frac{\left(\nu+\frac{w}{M}\right)^{k}}{k!}\right)^M}{\binom{N}{\nu-1}(-1)^{\nu-1}\left(\frac{(\nu+\frac{w}{M})^{\nu-1}}{(\nu-1)!}\right)^{M}}\right).
	\end{align*}
	Using the same estimates as in the proof of the statement \eqref{strongasymp} it is not difficult to see that the latter sum in fact is of order $\mathcal{O}\left(q^M\right)$, as $M\to \infty$, uniformly in $w$ on compact subsets of $\mathbb{C}$, which gives the statement in \eqref{localasym}.
\end{proof}

From the statement \eqref{localasym} we can infer that we have
\begin{align*}
\mathcal{P}_N^{(M)}\left(\sqrt{\nu + \frac{w}{M}}\right)
=\binom{N}{\nu-1}(-1)^{\nu-1}\left(\frac{\nu^{\nu-1}}{(\nu-1)!}\right)^Me^{w\left(1-\frac{1}{\nu}\right)}
\left(1-\frac{N+1-\nu}{\nu}e^{\frac{w}{\nu}}+o(1)\right),
\end{align*}
as $M\to\infty$, uniformly in $w$ on compact subsets of the complex plane. From this we observe that, for large $M$, the rescaled polynomials $\mathcal{P}_N^{(M)}\left(\sqrt{\nu + \frac{w}{M}}\right)$  have exactly one simple, real positive zero located approximately at the point 
\begin{equation*}
w=\nu\log \left(\frac{\nu}{N-\nu+1}\right).
\end{equation*}
This means that, for large $M$, the polynomials $\mathcal{P}_N^{(M)}(z)$ have exactly one simple positive zero in the neighbourhood of $\sqrt{\nu}$ located approximately at 
\begin{equation*}
\sqrt{\nu}\left(1+\frac{1}{2M}\log \frac{\nu}{N+1-\nu}\right).
\end{equation*}
Altogether, this shows that the zeros of the polynomials $\mathcal{P}_N^{(M)}(z)$ converge weakly, as $M\to\infty$, to the uniform distribution on the union of the circles around the origin with radii $1,\sqrt{2},\ldots,\sqrt{N}$. The plot below shows the zeros of 
$\mathcal{P}_N^{(M)}(\sqrt{z})$ for $N=5$ and $M=50$ (the argument is changed to $\sqrt{z}$ just for the reason of better visibility, in this case the zeros converge to the points $1,2,\ldots,N$).
\vspace{0.5cm}

\includegraphics[scale=0.5]{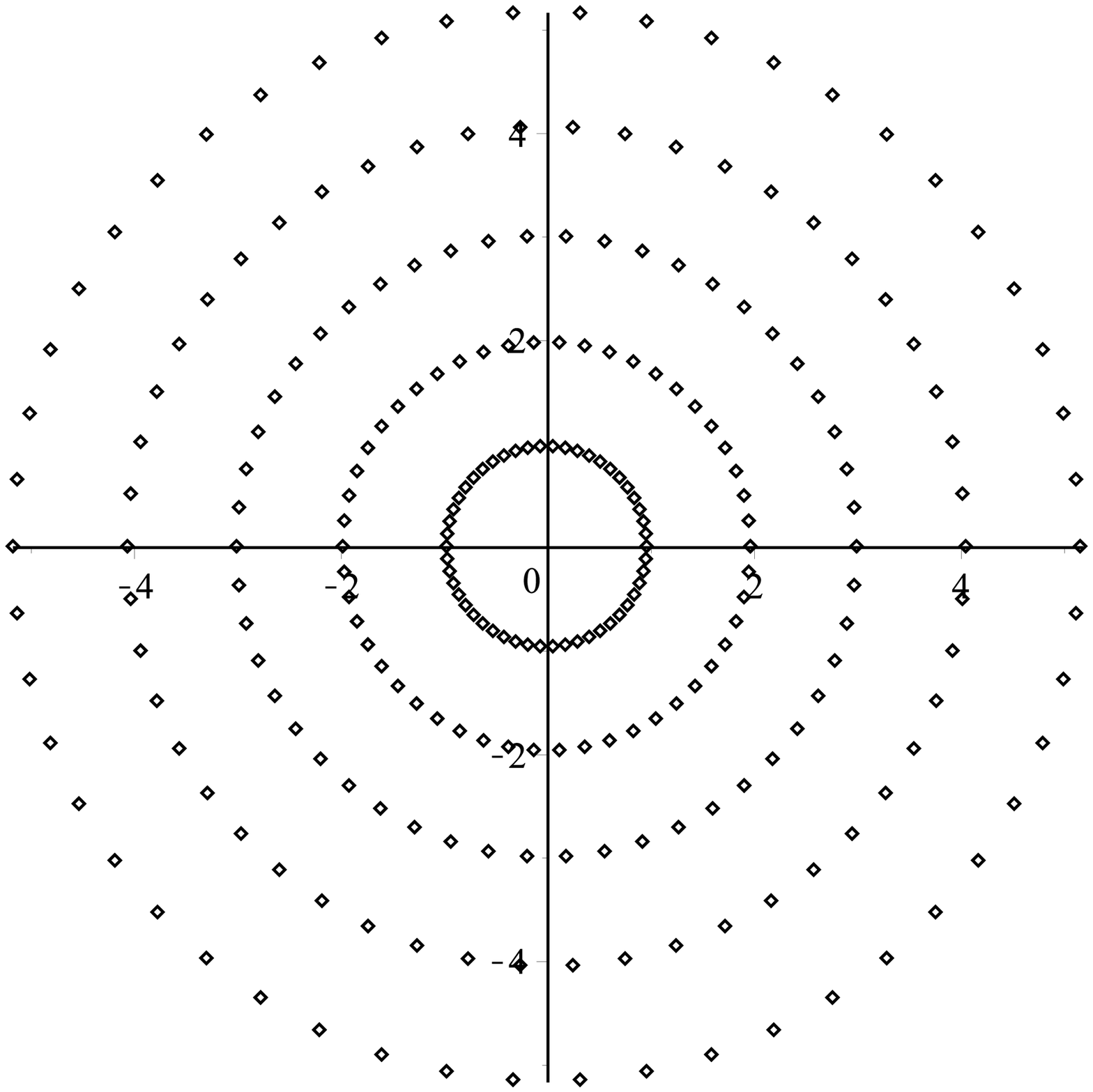}

\vspace{0.5cm}

For the zeros $z_j^{(M)}$ of the average characteristic polynomials 
$p_N^{(M)}$ in \eqref{OP}
this means
\begin{equation*}
\lim_{M\to\infty} \frac{1}{2M}\log \left(z_j^{(M)}\right)= \frac{1}{2}\log( j),\quad j=1,\ldots,N.
\end{equation*}

The above analysis can be used to deal with the case of general variances. To this end, we look at the rescaled polynomials  \eqref{M-Wigner-prod}
\begin{equation}
Q_N^{(M)} (w) :=  \mathcal{P}_N^{(M)} \left(\frac{w}{(\sigma_1 \cdots \sigma_M)^{1/M}}\right)= \sum_{k=0}^N \binom{N}{k} \frac{(-1)^k}{(\sigma^2_1\cdots \sigma^2_M)^k} \left(\frac{w^{2k}}{k!}\right)^{M},\ w\in \mathbb{C}.
\end{equation}
Assuming the additional condition 
\begin{equation*}
\lim_{M\to\infty} \left(\sigma_1 \cdots \sigma_M\right)^{\frac{1}{M}}= \sigma >0\ ,
\end{equation*}
or by simply setting $\sigma_k=\sigma>0$ for all $k$, 
we immediately obtain the asymptotic behaviour of $Q_N^{(M)}(w)$ in corresponding ring domains of the complex plane from the uniformity of Proposition \ref{asymptoticsplane}. Together with a statement analogous to Proposition \ref{asymptoticlocal} we infer that the zeros of the polynomials $Q_N^{(M)}(w)$ converge weakly, as $M\to\infty$, to the uniform distribution on the union of the circles around the origin with radii $\sigma, \sigma \sqrt{2},\ldots,\sigma \sqrt{N}$. This means for the zeros $w_j^{(M)}$ of the average characteristic polynomials in \eqref{M-Wigner-prod} that 
\begin{equation}
\lim_{M\to\infty} \frac{1}{2M}\log \left(w_j^{(M)}\right)= \frac{1}{2}\log (j) + \log (\sigma)\ ,\quad j=1,\ldots,N.
\end{equation}
This proves the statement of Theorem \ref{Thmzeros}.

\section*{Acknowledgments}
Support by the German research council
DFG through the grant  CRC 1283 ``Taming uncertainty and profiting from randomness and low regularity in analysis, stochastics and their applications''
is acknowledged.
We thank Mario Kieburg for useful discussions, and  
the Department of Mathematics at the Royal Institute of Technology (KTH) Stockholm for hospitality (G.A. and T.N.).



\begin{thebibliography}{00}

\bibitem{Afanasiev}
Afanasiev, I. On the correlation functions of the characteristic polynomials of non-Hermitian random matrices with independent entries. Journal of Statistical Physics 176(6) (2019) 1561-1582 [arXiv:1902.09390 [math-ph]].

\bibitem{ACi}
Akemann, G.,  Cikovic, M. Products of random matrices from fixed trace and induced Ginibre ensembles. Journal of Physics A: Mathematical and Theoretical 51(18) (2018) 184002 [arXiv:1711.05488 [math-ph]].

\bibitem{AIK}
Akemann, G., Ipsen, J. R.,  Kieburg, M. Products of rectangular random matrices: singular values and progressive scattering. Physical Review E 88(5) (2013) 052118 [arXiv:1307.7560 [math-ph]].

\bibitem{ABu}
Akemann, G.,  Burda, Z. Universal microscopic correlation functions for products of independent Ginibre matrices. Journal of Physics A: Mathematical and Theoretical 45(46) (2012) 465201 [arXiv:1208.0187 [math-ph]].

\bibitem{APS}
Akemann, G., Phillips, M. J.,  Sommers, H. J. Characteristic polynomials in real Ginibre ensembles. Journal of Physics A: Mathematical and Theoretical 42(1) (2008)  012001
[arXiv:0810.1458 [math-ph]].

\bibitem{AV}
Akemann, G.,  Vernizzi, G.  Characteristic polynomials of complex random matrix models. Nuclear Physics B 660(3) (2003) 532-556 [hep-th/0212051].

\bibitem{BDS}
Baik, J., Deift, P., Strahov, E. Products and ratios of characteristic polynomials of random Hermitian matrices. Journal of Mathematical Physics 44(8) (2003)  3657-3670 [math-ph/0304016].

\bibitem{BorodinStrahov}
Borodin, A.,  Strahov, E.  Averages of characteristic polynomials in random matrix theory. Communications on Pure and Applied Mathematics 59(2) (2006) 161-253 [math-ph/0407065].

\bibitem{BorodinOP}
Borodin, A. Biorthogonal ensembles. Nuclear Physics B 536(3)  (1998) 704-732 
[math/9804027].

\bibitem{BH}
Br\'ezin, E.,  Hikami, S.  Characteristic polynomials of random matrices. Communications in Mathematical Physics 214(1) (2000) 111-135 [math-ph/9910005].

\bibitem{BurdaLivanSwiech}
Burda, Z., Livan, G.,  Swiech, A.  Commutative law for products of infinitely large isotropic random matrices. Physical Review E 88(2) (2013) 022107 [arXiv:1303.5360 [cond-mat.stat-mech]].

\bibitem{ForresterDesrosier}
Desrosiers, P.,  Forrester, P. J.  A note on biorthogonal ensembles. Journal of Approximation Theory 152(2) (2008) 167-187 [math-ph/0608052].

\bibitem{Erdoes}
Erd{\H o}s, L. Universality of Wigner random matrices: a survey of recent results. Russian Mathematical Surveys 66(3) (2011) 507 [arXiv:1004.0861 [math-ph]].

\bibitem{FG04}
Forrester, P. J.,  Gamburd, A. Counting formulas associated with some random matrix averages. Journal of Combinatorial Theory, Series A 113(6) (2006) 934-951 
[math/0503002].

\bibitem{FI} 
Forrester, P. J., Ipsen, J. R. Real eigenvalue statistics for products of asymmetric real Gaussian matrices. Linear Algebra and its Applications 510 (2016) 259-290.
[arXiv:1608.04097 [math-ph]].

\bibitem{Peter}
Forrester, P. J. Lyapunov exponents for products of complex Gaussian random matrices. Journal of Statistical Physics 151(5)  (2013) 796-808 [arXiv:1206.2001 [math.PR]].

\bibitem{FS1}
Fyodorov, Y. V.,  Strahov, E. An exact formula for general spectral correlation function of random Hermitian matrices. Journal of Physics A: Mathematical and General 36(12) (2003) 3203 [math-ph/0204051].

\bibitem{GK}
G\"otze, F.,  K\"osters, H. On the second-order correlation function of the characteristic polynomial of a Hermitian Wigner matrix. Communications in Mathematical Physics 285(3) (2009) 1183-1205 [arXiv:0803.0926 [math.PR]].

\bibitem{GGK}
Gr\"onqvist, J., Guhr, T.,  Kohler, H. The k-point random matrix kernels obtained from one-point supermatrix models. Journal of Physics A: Mathematical and General, 37(6) (2004) 2331 [math-ph/0402018].

\bibitem{Ipsen}
Ipsen, J. R.  Products of independent quaternion Ginibre matrices and their correlation functions. Journal of Physics A: Mathematical and Theoretical 46(26) (2013) 265201 [arXiv:1301.3343 [math-ph]].

\bibitem{IK}
Ipsen, J. R.,  Kieburg, M. Weak commutation relations and eigenvalue statistics for products of rectangular random matrices. Physical Review E 89(3) (2014) 032106 [arXiv:1310.4154 [math-ph]].

\bibitem{JesperLyap}
Ipsen, J. R. Lyapunov exponents for products of rectangular real, complex and quaternionic Ginibre matrices. Journal of Physics A: Mathematical and Theoretical, 48(15)  (2015) 155204 [arXiv:1412.3003 [math-ph]].

\bibitem{KN00}
Keating, J. P.,  Snaith, N. C. Random matrix theory and $\zeta(1/2+ it)$. Communications in Mathematical Physics 214(1) (2000) 57-89.

\bibitem{Mario15}
Kieburg, M. Supersymmetry for products of random matrices. 
Acta Physica Polonica B 46 (2015) 1709 [arXiv:1502.00550 [math-ph]].

\bibitem{K}
K\"osters, H., On the occurrence of the sine kernel in connection with the shifted
moments of the Riemann zeta function,
Journal of Number Theory 130(11) (2010) 2596-2609 [arXiv:0803.1141].

\bibitem{Neuschel1} 
Neuschel, T. Plancherel-Rotach formulae for average characteristic polynomials of products of Ginibre random matrices and the Fuss-Catalan distribution. Random Matrices: Theory and Applications 3 (2014) 1450003 
[arXiv:1311.0365 [math.CA]].
	
\bibitem{Newman}
Newman, C. M. The distribution of Lyapunov exponents: Exact results for random matrices. Communications in Mathematical Physics 103(1) (1986) 121-126.

\bibitem{NIST} Olver, F.W., Lozier, D.W., Boisvert, R.F. and Clark, C.W. eds. {\em NIST handbook of mathematical functions.} Cambridge University Press, Cambridge (2010).
	
\bibitem{Tatyana}	
Shcherbina, T. On the correlation function of the characteristic polynomials of the hermitian Wigner ensemble. Communications in Mathematical Physics 308(1) (2011) 1 [arXiv:1006.2536 [math-ph]].
	
\bibitem{Misha}	
Stephanov, M. A. Random matrix model of QCD at finite density and the nature of the quenched limit. Physical Review Letters 76(24) (1996) 4472-4475 
[hep-lat/9604003].

\bibitem{FS}
Strahov, E.,  Fyodorov, Y. V. Universal results for correlations of characteristic polynomials: Riemann-Hilbert approach. Communications in Mathematical Physics 241(2-3) (2003) 343-382 [math-ph/0210010].

\bibitem{TaoVu}	
Tao, T.,  Vu, V.  Random matrices: universality of local spectral statistics of non-Hermitian matrices. The Annals of Probability 43(2) (2015) 782-874 [arXiv:1206.1893 [math.PR]].	
	
\bibitem{reviewL} 
Viana, M. {\em Lectures on Lyapunov Exponents}, Cambridge University Press, Cambridge (2014).
 		
\bibitem{PZJ}	
Zinn-Justin, P.  Universality of correlation functions of Hermitian random matrices in an external field. Communications in Mathematical Physics 194(3) (1998) 631-650 [cond-mat/9705044].
	
\end{thebibliography}
\end{document}